\numberwithin{equation}{section}
\numberwithin{figure}{section}
\theoremstyle{plain}
\newtheorem{thm}{\protect\theoremname}
\theoremstyle{definition}
\newtheorem{defn}[thm]{\protect\definitionname}
\theoremstyle{plain}
\newtheorem{lem}[thm]{\protect\lemmaname}
\theoremstyle{plain}
\newtheorem{fact}[thm]{\protect\factname}
\providecommand{\definitionname}{Definition}
\providecommand{\factname}{Fact}
\providecommand{\lemmaname}{Lemma}
\providecommand{\theoremname}{Theorem}
\begin{document}

\title[$IP^{\star}$ set in product space of partial semigroups]{{\large{}$IP^{\star}$ set in product space of countable adequate
commutative partial semigroups}}

\author{Aninda Chakraborty }

\address{{\large{}Aninda Chakraborty, Department of mathematics, Government
General Degree College Chapra, University of Kalyani}}

\email{{\large{}anindachakraborty2@gmail.com}}

\keywords{{\large{}Partial semigroup, $IP^{\star}$sets, Product subsystem.}}
\begin{abstract}
{\large{}A partial semigroup is a set with restricted binary operation.
In this work we will extend a result due to V. Bergelson and N. Hindman
concerning the rich structure presented in the product space of semigroups
to partial semigroup. An $IP^{\star}$ set in a semigroup is a set
that intersect every set of the form $\left\{ FS(x_{n})_{n=1}^{\infty}:x_{n}\in S\right\} $.
V. Bergelson and N. Hindman proved that if $S_{1},S_{2},\ldots,S_{l}$
are finite collection of commutative semigroup, then under certain
condition, an $IP^{\star}$ set in $S_{1}\times S_{2}\times\ldots\times S_{l}$
contains cartesian products of arbitrarily large finite substructures
of the form $FS\left(x_{1,n}\right)_{n=1}^{\infty}\times FS\left(x_{2,n}\right)_{n=1}^{\infty}\times\ldots\times FS\left(x_{l,n}\right)_{n=1}^{\infty}$.
In this work we will extend this result to countable adequate commutative
partial semigroup.}{\large\par}
\end{abstract}

\maketitle

\section{{\large{}introduction}}

{\large{}A partial semigroup is a set with restricted binary operation
defined as,}{\large\par}
\begin{defn}
{\large{}\label{Defn 1} A partial semigroup is defined as a pair
$(S,\ast)$ where $\ast$ maps a subset of $S\times S$ to $S$ and
satisfies for all $a,b,c\in S$ , $(a\ast b)\ast c=a\ast(b\ast c)$
in the sense that if either side is defined, then so is the other
and they are equal.}{\large\par}
\end{defn}

{\large{}There are several notion of largeness of sets in partial
semigroup similar to semigroup arises from similar characterization
of the structure of Stone-\v{C}ech compactification of semigroup,
$\beta S$. The Stone-\v{C}ech compactification of a partial semigroup
contains an interesting subsemigroup, which gives us many important
notions of largeness of sets. }{\large\par}

{\large{}For a commutative semigroup $(S,+)$, the Stone-\v{C}ech
compactification, $\beta S$, is the collection of all ultrafilters
on $S$. Identifying the principal ultrafilters with the points of
$S$ we see that $S\subseteq\beta S$. Given$A\subseteq S$ let us
set, 
\[
\overline{A}=\{p\in\beta S:\:A\in p\}.
\]
 Then the set $\{\overline{A}:\:A\subseteq S\}$ is a basis for a
topology on $\beta S$. The operation $'+'$ on $S$ can be extended
to the Stone-\v{C}ech compactification $\beta S$ of $S$ so that
$(\beta S,+)$ is a compact right topological semigroup (meaning that
for any $p\in\beta S$, the function $\rho_{p}:\beta S\rightarrow\beta S$
defined by $\rho_{p}(q)=q+p$ is continuous) with $S$ contained in
its topological center (meaning that for any $x\in S$, the function
$\lambda_{x}:\beta S\rightarrow\beta S$ defined by $\lambda_{x}(q)=x+q$
is continuous). Given $p,q\in\beta S$ and $A\subseteq S$, $A\in p+q$
if and only if $\{x\in S:-x+A\in q\}\in p$, where $-x+A=\{y\in S:x+y\in A\}$.}{\large\par}

{\large{}The following technical definitions are required,}{\large\par}
\begin{defn}
{\large{}\label{Defn 2} Let $\left(S,\ast\right)$ be a partial semigroup.}{\large\par}

{\large{}(a) For $s\in S$, $\varphi\left(s\right)=\left\{ t\in S:\:s\ast t\text{ is defined}\right\} $.}{\large\par}

{\large{}(b) For $H\in\mathcal{P}_{f}\left(S\right),\:\sigma\left(H\right)=\bigcap_{s\in H}\varphi\left(s\right)$.}{\large\par}

{\large{}(c) $\left(S,\ast\right)$ is adequate iff $\sigma\left(H\right)\neq\emptyset$
for all $H\in\mathcal{P}_{f}\left(S\right)$.}{\large\par}

{\large{}(d) $\delta S=\bigcap_{x\in S}\overline{\varphi\left(x\right)}=\bigcap_{H\in\mathcal{P}_{f}\left(S\right)}\overline{\sigma\left(H\right)}$.}{\large\par}
\end{defn}

{\large{}In fact, for a semigroup $S$, $\delta S=\beta S$ and adequacy
is needed in partial semigroup to guarantee that $\delta S\neq\emptyset$.
$(\delta S,+)$ is a compact right topological semigroup \cite[Theorem 2.10]{key-3}.
In particular, the combinatorially and algebraically defined large
sets are sometimes different for partial semigroups. Although, they
coincide for semigroups. In partial semigroup, each algebraically
defined large sets have a combinatorial version which is defined as
using a notions precede by \v{c}.}{\large\par}

{\large{}For details and recent advances on partial semigroup, one
can read \cite{key-2,key-4}.}{\large\par}
\begin{defn}
{\large{}\label{Definition 3} Let $(S,\ast)$ be a partial semigroup
and let $f$ be a sequence in $S$. Then $f$ is $adequate$ if and
only if}\\
{\large{}1. For each $F\in\mathcal{P}_{f}\left(\mathbb{N}\right)$,
$\prod_{t\in F}f(t)$ is defined.}\\
{\large{}2. For each $F\in\mathcal{P}_{f}\left(S\right)$, there exist
$m\in\mathbb{N}$, such that $FP\left(\langle f(t)\rangle_{t=m}^{\infty}\right)\subseteq\sigma(F)$}\\
{\large{}Define for an adequate partial semigroup $(S,\ast)$,
\[
\mathcal{F}=\left\{ f:f\,is\,an\,adequate\,sequence\,in\,S\right\} 
\]
}{\large\par}
\end{defn}

{\large{}It has shown in \cite[Theorem 4.1]{key-4} that in countable
adequate partial semigroup $adequate\,sequences$ exists.}{\large\par}

{\large{}For an adequate partial semigroup we have these definitions
of $IP$ set both algebraically and combinatorially.}{\large\par}
\begin{defn}
{\large{}\label{Defn 4} \cite[Definition 5.3]{key-6} Let $\left(S,\ast\right)$
be an adequate partial semigroup and suppose $A\subseteq S$.}{\large\par}

{\large{}(a) $A$ is $IP$ if and only if there exists an idempotent
$p\in\delta S$ such that $A\in p$.}{\large\par}

{\large{}(b) $A$ is \v{c}-$IP$ (combinatorially $IP$) if and only
if there exists a sequence $\left\langle x_{n}\right\rangle _{n=1}^{\infty}$
in $S$ such that for all $F\in\mathcal{P}_{f}\left(\mathbb{N}\right)$,}\\
{\large{} $\prod_{n\in F}x_{n}$ is defined and $\prod_{n\in F}x_{n}\in A$.}{\large\par}
\end{defn}

{\large{}In \cite[Theorem 5.4]{key-6}, McLeod gave an example of
an \v{c}-$IP$ set which is not $IP$. A set is said to be $IP^{\star}$
if it intersects every $IP$ set, i.e, it belongs to every idempotent
$p\in\delta S$. }{\large\par}

\section{{\large{}proof of main theorem}}
\begin{lem}
{\large{}\label{Lemma 5} Let $(S,+)$ be countable adequate partial
semigroup then, 
\[
\bigcap_{m=1}^{\infty}\overline{FS\left(\left\langle x_{n}\right\rangle _{n=m}^{\infty}\right)}\cap\delta S
\]
is a semigroup.}{\large\par}
\end{lem}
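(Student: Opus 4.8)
The plan is to show that $T:=\bigcap_{m=1}^{\infty}\overline{FS\left(\langle x_{n}\rangle_{n=m}^{\infty}\right)}\cap\delta S$ is closed under the extended operation; since $(\delta S,+)$ is associative this makes it a subsemigroup. Throughout I assume $\langle x_{n}\rangle_{n=1}^{\infty}$ is an adequate sequence, so that every finite sum written below is defined. First I would fix $p,q\in T$ and reduce the goal. Because $(\delta S,+)$ is a compact right topological semigroup \cite[Theorem 2.10]{key-3} and $p,q\in\delta S$, we already have $p+q\in\delta S$. Hence it suffices to prove that $FS\left(\langle x_{n}\rangle_{n=m}^{\infty}\right)\in p+q$ for every $m\in\mathbb{N}$, as this is exactly the statement that $p+q$ lies in each $\overline{FS\left(\langle x_{n}\rangle_{n=m}^{\infty}\right)}$.

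Next I would fix $m$, abbreviate $A=FS\left(\langle x_{n}\rangle_{n=m}^{\infty}\right)$, and invoke the ultrafilter characterization of the operation: $A\in p+q$ if and only if $\{x\in S:-x+A\in q\}\in p$. Since $p\in\overline{A}$ we have $A\in p$, so it is enough to show the inclusion $A\subseteq\{x\in S:-x+A\in q\}$. To that end I would take an arbitrary $a\in A$, written $a=\sum_{n\in F}x_{n}$ with $F\in\mathcal{P}_{f}(\mathbb{N})$ and $\min F\geq m$, and set $k=\max F$. The key claim is the tail inclusion
\[
FS\left(\langle x_{n}\rangle_{n=k+1}^{\infty}\right)\subseteq -a+A.
\]
Indeed, if $b=\sum_{n\in G}x_{n}$ with $\min G>k$, then $F$ and $G$ are disjoint, adequacy of the sequence guarantees $a+b$ is defined, and commutativity together with associativity give $a+b=\sum_{n\in F\cup G}x_{n}$, which lies in $A$ because $\min(F\cup G)\geq m$.

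To finish, I would use the hypothesis $q\in T$: it gives $FS\left(\langle x_{n}\rangle_{n=k+1}^{\infty}\right)\in q$, and the displayed inclusion then forces $-a+A\in q$, so $a\in\{x\in S:-x+A\in q\}$. As $a\in A$ was arbitrary, $A\subseteq\{x\in S:-x+A\in q\}$, whence this set belongs to $p$ and $A\in p+q$. Since $m$ was arbitrary, $p+q\in\bigcap_{m=1}^{\infty}\overline{FS\left(\langle x_{n}\rangle_{n=m}^{\infty}\right)}$, and together with $p+q\in\delta S$ this yields $p+q\in T$.

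The step I expect to require the most care is handling the partiality of the operation rather than the tail argument, which is routine. The set $-a+A=\{y\in S:a+y\in A\}$ and the characterization of $p+q$ must be read in the partial setting, where $a+y$ is defined only for $y\in\varphi(a)$; this is precisely where $q\in\delta S$ and adequacy enter. Membership $q\in\delta S\subseteq\overline{\varphi(a)}$ ensures $\varphi(a)\in q$, and condition 1 of adequacy ensures the tail $FS\left(\langle x_{n}\rangle_{n=k+1}^{\infty}\right)$ actually sits inside $\varphi(a)$, so restricting attention to defined sums loses nothing and the argument above is legitimate.
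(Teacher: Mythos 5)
Your closure argument is correct and takes a genuinely different route from the paper. The paper never verifies closure by hand: it checks that the family $\left\{ FS\left(\langle x_{t}\rangle_{t=n}^{\infty}\right)\cap\sigma(H):\ n\in\mathbb{N},\ H\in\mathcal{P}_{f}(S)\right\}$ has the finite intersection property and then quotes McLeod's Lemma 5.6 to conclude that the intersection is a subsemigroup. Your tail-sum argument (fix $a=\sum_{n\in F}x_{n}$, set $k=\max F$, show $FS\left(\langle x_{n}\rangle_{n=k+1}^{\infty}\right)\subseteq -a+A$, then use $q\in\overline{FS\left(\langle x_{n}\rangle_{n=k+1}^{\infty}\right)}$) is in effect a self-contained proof of the special case of that cited lemma, and your handling of partiality --- $q\in\delta S$ gives $\varphi(a)\in q$, while condition 1 of adequacy plus generalized associativity puts the tail inside $\varphi(a)$ --- is exactly what is needed to make the ultrafilter characterization of $+$ legitimate in $\delta S$. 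One small remark: since $\max F<\min G$, associativity alone gives $a+b=\sum_{n\in F\cup G}x_{n}$; your appeal to commutativity is unnecessary, which is just as well because the lemma does not assume it.

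There is, however, a gap: you prove only that the set is closed under $+$, never that it is nonempty, and nonemptiness is precisely the part of the lemma on which the paper's proof spends all of its effort. The decreasing tails $\overline{FS\left(\langle x_{n}\rangle_{n=m}^{\infty}\right)}$ do intersect by compactness, but you must also intersect with $\delta S=\bigcap_{H\in\mathcal{P}_{f}(S)}\overline{\sigma(H)}$, and for that you need $FS\left(\langle x_{t}\rangle_{t=n}^{\infty}\right)\cap\sigma(H)\neq\emptyset$ for every $n\in\mathbb{N}$ and every $H\in\mathcal{P}_{f}(S)$; this is exactly where condition 2 of adequacy of the sequence enters, a hypothesis you never invoke (you use only condition 1). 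The omission matters because the lemma is immediately used in Fact \ref{Fact 6} to extract an idempotent via \cite[Theorem 2.5]{key-5}, which requires a \emph{nonempty} compact right topological semigroup; an empty ``semigroup'' would make that step fail. The repair is short: given $H$ and $n$, condition 2 yields $m$ with $FS\left(\langle x_{t}\rangle_{t=m}^{\infty}\right)\subseteq\sigma(H)$, so $FS\left(\langle x_{t}\rangle_{t=\max(m,n)}^{\infty}\right)\subseteq FS\left(\langle x_{t}\rangle_{t=n}^{\infty}\right)\cap\sigma(H)$, giving the finite intersection property and hence nonemptiness by compactness of $\beta S$. With that paragraph added, your proof is a complete and more elementary alternative to the paper's citation-based argument.
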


\begin{proof}
{\large{}Choose an adequate sequence $\left\langle x_{n}\right\rangle _{n=1}^{\infty}$
by \cite[Theorem 4.1]{key-4}. As for each $F\in\mathcal{P}_{f}\left(S\right)$,
there exist $m\in\mathbb{N}$, such that $FS\left(\langle x_{t}\rangle_{t=m}^{\infty}\right)\subseteq\sigma(F)$,
we have $FS\left(\langle x_{t}\rangle_{t=n}^{\infty}\right)\bigcap\sigma(H)\neq\emptyset$
for all $H\in\mathcal{P}_{f}\left(S\right)$ and $n\in\mathbb{N}$.
Now take the following set,
\[
B=\left\{ FS\left(\langle x_{t}\rangle_{t=n}^{\infty}\right)\bigcap\sigma(H):n\in\mathbb{N},H\in\mathcal{P}_{f}\left(S\right)\right\} 
\]
Now claim is $B$ satiesfies finite intersection property. To see
this take any two $H,F\in\mathcal{P}_{f}\left(S\right)$ and $m,n\in\mathbb{N}$.
Then,
\begin{align*}
\left(FS\left(\langle x_{t}\rangle_{t=n}^{\infty}\right)\bigcap\sigma(H)\right)\bigcap\left(FS\left(\langle x_{t}\rangle_{t=m}^{\infty}\right)\bigcap\sigma(F)\right)\\
=FS\left(\langle x_{t}\rangle_{t=n}^{\infty}\right)\bigcap FS\left(\langle x_{t}\rangle_{t=m}^{\infty}\right)\bigcap\sigma(H\cup F)\neq\emptyset
\end{align*}
}{\large\par}

{\large{}The last line follows from \cite[Theorem 4.1]{key-4}.}{\large\par}

{\large{}Now as $B$ satiesfies finite intersection property, we have,}\\
{\large{} $\bigcap_{m=1}^{\infty}\overline{FS\left(\left\langle x_{n}\right\rangle _{n=m}^{\infty}\right)}\cap\delta S$
is a semigroup from \cite[Lemma 5.6]{key-6}.}{\large\par}
\end{proof}
\begin{fact}
{\large{}\label{Fact 6} As $\bigcap_{m=1}^{\infty}\overline{FS\left(\left\langle x_{n}\right\rangle _{n=m}^{\infty}\right)}\cap\delta S$
is a semigroup, we have an idempotent $p\in\bigcap_{m=1}^{\infty}\overline{FS\left(\left\langle x_{n}\right\rangle _{n=m}^{\infty}\right)}\cap\delta S$
by \cite[Theorem 2.5, page 40]{key-5}.}{\large\par}
\end{fact}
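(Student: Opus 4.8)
The plan is to recognize this as a direct application of the Ellis--Numakura lemma, namely \cite[Theorem 2.5, page 40]{key-5}, which asserts that every nonempty compact Hausdorff right topological semigroup contains an idempotent. Thus the work reduces to verifying that the set
\[
T=\bigcap_{m=1}^{\infty}\overline{FS\left(\left\langle x_{n}\right\rangle _{n=m}^{\infty}\right)}\cap\delta S
\]
satisfies the hypotheses of that lemma, and Lemma \ref{Lemma 5} already supplies the crucial one.

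First I would record that $T$ is nonempty and compact. Each set $\overline{FS\left(\left\langle x_{n}\right\rangle _{n=m}^{\infty}\right)}$ is a closed subset of the compact Hausdorff space $\beta S$, and $\delta S$ is closed as well; the finite intersection property established in the proof of Lemma \ref{Lemma 5} shows that every finite subfamily of these closed sets has nonempty intersection, so compactness of $\beta S$ forces $T\neq\emptyset$, while the same closedness makes $T$ a closed, hence compact, subspace. Next I would invoke the fact that $(\delta S,+)$ is a compact right topological semigroup by \cite[Theorem 2.10]{key-3}, together with Lemma \ref{Lemma 5}, which shows that $T$ is closed under $+$. Consequently $T$ is a subsemigroup of $\delta S$ on which each map $\rho_{p}$ restricts to a continuous self-map, so that $(T,+)$ is itself a compact right topological semigroup.

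With these verifications in place the Ellis--Numakura lemma applies verbatim and produces an idempotent $p=p+p\in T$, which is the desired conclusion. I expect no genuine obstacle here: the algebraic closure of $T$ under the operation is exactly the content of Lemma \ref{Lemma 5}, and both compactness and the right topological structure are inherited automatically from the ambient $\beta S$ and $\delta S$. The only point demanding a line of care is the nonemptiness of $T$, and even that is handed to us by the finite intersection property already verified in the proof of Lemma \ref{Lemma 5}.
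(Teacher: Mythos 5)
Your proposal is correct and matches the paper's (implicit) argument exactly: the paper justifies Fact \ref{Fact 6} by combining Lemma \ref{Lemma 5} with the Ellis--Numakura theorem \cite[Theorem 2.5, page 40]{key-5}, which is precisely what you do. Your added verifications of nonemptiness, compactness, and the right topological structure are the details the paper leaves tacit, and they are all handled correctly.
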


\begin{defn}
{\large{}\label{Definition 7} Let $\left\langle y_{n}\right\rangle _{n=1}^{\infty}$
be an $adequate\,sequence$ in an adequate commutative partial semigroup
$(S,+)$ and let $k,m\in\mathbb{N}$. Then $\left\langle x_{n}\right\rangle _{n=1}^{m}$
is a product subsystem of $FS\left(\left\langle y_{n}\right\rangle _{n=k}^{\infty}\right)$
if and only if there exists a sequence $\left\langle H_{n}\right\rangle _{n=1}^{m}$
in $\mathcal{P}_{f}\left(\mathbb{N}\right)$ such that}{\large\par}

{\large{}(a) $\min H_{1}\geq k$, }{\large\par}

{\large{}(b) $\max H_{n}<\min H_{n+1}$for each $n\in\left\{ 1,2,\ldots,m-1\right\} $
and}{\large\par}

{\large{}(c) $x_{n}=\sum_{t\in H_{n}}y_{t}$ for each $n\in\left\{ 1,2,\ldots,m\right\} $.}{\large\par}
\end{defn}

{\large{}Now we prove a result which will be useful to our work.}{\large\par}
\begin{thm}
{\large{}\label{Theorem 8} Let $\left(S_{i},+\right),\,i=1,2,\ldots,l$
be $l$ countable adequate commutative partial semigroups and $l\in\mathbb{N}\setminus\{1\}$
and let $\left\langle x_{i,n}\right\rangle _{n=1}^{\infty}$ are sequences
in $S_{i}$ for each $i\in\{1,2,\ldots,l\}$. Let $m,r\in\mathbb{N}$
and $\times_{i=1}^{l}S_{i}=\bigcup_{j=1}^{r}D_{j}$. Then there exists
$j\in\left\{ 1,2,\ldots,r\right\} $ and for each $i\in\left\{ 1,2,\ldots,l-1\right\} $,
there exists a product subsystem $\left\langle y_{i,n}\right\rangle _{n=1}^{m}$
of $FS\left(\left\langle x_{i,n}\right\rangle _{n=1}^{\infty}\right)$
and there exists a product subsystem $\left\langle y_{l,n}\right\rangle _{n=1}^{\infty}$
of $FS\left(\left\langle x_{l,n}\right\rangle _{n=1}^{\infty}\right)$
such that
\[
\left(\times_{i=1}^{l-1}FS\left(\left\langle y_{i,n}\right\rangle _{n=1}^{m}\right)\right)\times FS\left(\left\langle y_{l,n}\right\rangle _{n=1}^{\infty}\right)\subset D_{j}.
\]
}{\large\par}
\end{thm}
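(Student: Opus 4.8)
The plan is to prove the statement by induction on $l$, using a single idempotent ultrafilter to absorb the last coordinate and an induced finite colouring to carry the induction on the remaining coordinates. First I would apply Lemma \ref{Lemma 5} and Fact \ref{Fact 6} to the sequence $\langle x_{l,n}\rangle_{n=1}^{\infty}$ to obtain an idempotent
\[
p\in\bigcap_{k=1}^{\infty}\overline{FS\left(\langle x_{l,n}\rangle_{n=k}^{\infty}\right)}\cap\delta S_{l}.
\]
The feature of $p$ that I will use repeatedly is that $FS(\langle x_{l,n}\rangle_{n=k}^{\infty})\in p$ for every $k$, so that every member of $p$ contains a product subsystem of $FS(\langle x_{l,n}\rangle_{n=1}^{\infty})$. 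This partial-semigroup relative Hindman extraction is the engine of the whole argument.

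Next I would use $p$ to collapse the colouring of $\times_{i=1}^{l}S_{i}$ to a colouring of $\times_{i=1}^{l-1}S_{i}$. For each $\vec{a}=(a_{1},\dots,a_{l-1})\in\times_{i=1}^{l-1}S_{i}$ and each $j$, set
\[
C_{\vec{a},j}=\{b\in S_{l}:(\vec{a},b)\in D_{j}\}.
\]
For fixed $\vec{a}$ the sets $C_{\vec{a},1},\dots,C_{\vec{a},r}$ partition $S_{l}$, so exactly one of them lies in $p$; define $c(\vec{a})$ to be that unique $j$. This produces a genuine $r$-colouring $c$ of $\times_{i=1}^{l-1}S_{i}$, and the finiteness of the palette is exactly what makes the induction go through.

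For the inductive step ($l\ge 3$) I would apply the theorem for $l-1$ semigroups to the colouring $c$, obtaining a colour $j_{0}$, finite product subsystems $\langle y_{i,n}\rangle_{n=1}^{m}$ of $FS(\langle x_{i,n}\rangle)$ for $i\le l-2$, and an infinite product subsystem in coordinate $l-1$, whose product lies in $c^{-1}(j_{0})$. Truncating coordinate $l-1$ to its first $m$ terms yields length-$m$ subsystems in all coordinates $1,\dots,l-1$, hence a finite set
\[
T=\times_{i=1}^{l-1}FS\left(\langle y_{i,n}\rangle_{n=1}^{m}\right)\subseteq c^{-1}(j_{0}).
\]
Since $T$ is finite and $C_{\vec{a},j_{0}}\in p$ for each $\vec{a}\in T$, the set $A=\bigcap_{\vec{a}\in T}C_{\vec{a},j_{0}}$ still lies in $p$. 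Applying the relative Hindman extraction to $A\in p$ gives an infinite product subsystem $\langle y_{l,n}\rangle_{n=1}^{\infty}$ of $FS(\langle x_{l,n}\rangle)$ with $FS(\langle y_{l,n}\rangle_{n=1}^{\infty})\subseteq A$. Then for every $\vec{a}\in T$ and every $b\in FS(\langle y_{l,n}\rangle_{n=1}^{\infty})$ we have $b\in A\subseteq C_{\vec{a},j_{0}}$, i.e. $(\vec{a},b)\in D_{j_{0}}$, which is precisely the required containment. The base case $l=2$ is the same argument, with the inductive application replaced by a single-coordinate extraction for $c$ on $S_{1}$, using an idempotent furnished by Lemma \ref{Lemma 5} and Fact \ref{Fact 6} for $\langle x_{1,n}\rangle$.

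The main obstacle I anticipate is the relative Hindman extraction in the partial setting: given $A\in p$ I must build $\langle y_{l,n}\rangle$ so that all partial sums $\sum_{t\in F}y_{l,t}$ are defined and lie in $A$, and so that $\langle y_{l,n}\rangle$ is a genuine product subsystem of $FS(\langle x_{l,n}\rangle)$ in the sense of Definition \ref{Definition 7}. This is where adequacy and the membership $p\in\bigcap_{k}\overline{FS(\langle x_{l,n}\rangle_{n=k}^{\infty})}$ do the real work: passing to $A^{\star}=\{s\in A:-s+A\in p\}$ and choosing each new term from $A^{\star}\cap FS(\langle x_{l,n}\rangle_{n=k}^{\infty})$ with $k$ larger than every previously used index keeps the sums defined (using commutativity and adequacy) and forces the index blocks $H_{n}$ of Definition \ref{Definition 7} to be increasing. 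I would also check that the induced colouring $c$ is well defined, which rests only on $p$ being an ultrafilter, and that truncating an infinite product subsystem to length $m$ is again a product subsystem, which is immediate from Definition \ref{Definition 7}.
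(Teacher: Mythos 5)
Your proposal is correct and is essentially the paper's own argument repackaged as induction on $l$: your induced colouring $c$ (defined by which $C_{\vec{a},j}$ belongs to the idempotent attached to the last coordinate) is exactly the paper's tower of sets $B_{t}$, and unrolling your induction reproduces the paper's proof verbatim --- one idempotent per factor from Fact \ref{Fact 6}, finite intersections over the finite sets $\times_{i}FS\left(\left\langle y_{i,n}\right\rangle _{n=1}^{m}\right)$, and coordinate-by-coordinate extraction of product subsystems. The two small blemishes are either shared with the paper (the relative extraction of an infinite product subsystem from a member of the idempotent is used without proof there too, and you at least sketch the $A^{\star}$ argument) or trivial to fix (the $D_{j}$ are only assumed to cover, not partition, $\times_{i=1}^{l}S_{i}$, so for fixed $\vec{a}$ \emph{at least} one $C_{\vec{a},j}$ lies in $p$; take the least such $j$).
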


\begin{proof}
{\large{}For each $i\in\left\{ 1,2,\ldots,l\right\} $, from fact
\ref{Fact 6}, let $p_{i}$ be the idempotents with
\[
p_{i}\in\bigcap_{k=1}^{\infty}\overline{FS\left(\left\langle x_{i,n}\right\rangle _{n=k}^{\infty}\right)}
\]
 For $\left(y_{1},y_{2},\ldots,y_{i-1}\right)\in\times_{i=1}^{l-1}S_{i}$
and $j\in\left\{ 1,2,\ldots,r\right\} $, take,}\\
{\large{} $B_{l}\left(y_{1},y_{2},\ldots,y_{l-1},j\right)=\left\{ y\in S_{l}:\:\left(y_{1},y_{2},\ldots,y_{l-1},y\right)\in D_{j}\right\} $.}{\large\par}

{\large{}Now, given $t\in\left\{ 2,3,\ldots,l-1\right\} $, assume
that $B_{t+1}\left(y_{1},y_{2},\ldots,y_{t},j\right)$ has been defined
for each $\left(y_{1},y_{2},\ldots,y_{t}\right)\in\times_{i=1}^{t}S_{i}$
and each $j\in\left(1,2,\ldots,r\right)$. Given 
\[
\left(y_{1},y_{2},\ldots,y_{t-1}\right)\in\times_{i=1}^{t-1}S_{i}\text{ and \ensuremath{j\in\left\{ 1,2,\ldots,r\right\} }},
\]
 let
\[
B_{t}\left(y_{1},y_{2},\ldots,y_{t-1},j\right)=\:\left\{ y\in S_{t}:\:B_{t+1}\left(y_{1},y_{2},\ldots,y_{t-1},y,j\right)\in p_{t+1}\right\} .
\]
}{\large\par}

{\large{}Finally, given that $B_{2}\left(x,j\right)$ has been defined
for each $x\in S_{1}$ and each $j\in\left\{ 1,2,\ldots,r\right\} $,
let $B_{1}\left(j\right)=\left\{ x\in S_{1}:\:B_{2}\left(x,j\right)\in p_{2}\right\} $.}\\
{\large{} We show by downward induction on $t$ that for each $t\in\left\{ 2,3,\ldots,l\right\} $
and each $\left(y_{1},y_{2},\ldots,y_{t-1}\right)\in\times_{i=1}^{t-1}S_{i}$,
\[
S_{t}=\cup_{j=1}^{r}B_{t}\left(y_{1},y_{2},\ldots,y_{t-1},j\right)
\]
 This is trivially true for $t=l$. }\\
{\large{}Assume $t\in\left\{ 2,3,\ldots,l-1\right\} $ and the statement
is true for $t+1$. }\\
{\large{}Let, $\left(y_{1},y_{2},\ldots,y_{t-1}\right)\in\times_{i=1}^{t-1}S_{i}$.
Given $y\in S_{t}$, one has that,
\[
S_{t+1}=\bigcup_{j=1}^{r}B_{t+1}\left(y_{1},y_{2},\ldots,y_{t-1},y,j\right).
\]
}{\large\par}

{\large{}Now pick $j\in\left\{ 1,2,\ldots,r\right\} $ such that $B_{t+1}\left(y_{1},y_{2},\ldots,y_{t-1},y,j\right)\in p_{t+1}$.
Then $y\in B_{t}\left(y_{1},y_{2},\ldots,y_{t-1},j\right)$.}{\large\par}

{\large{}Since for each $x\in S_{1},S_{2}=\cup_{j=1}^{r}B_{2}\left(x,j\right)$,
one sees similarly that $S_{1}=\cup_{j=1}^{r}B_{1}\left(j\right)$.
Pick $j\in\left\{ 1,2,\ldots,r\right\} $ such that $B_{1}\left(j\right)\in p_{1}$.
Pick a product subsystem $\left\langle y_{1,n}\right\rangle _{n=1}^{\infty}$
of $FS\left(\left\langle x_{1,n}\right\rangle _{n=1}^{\infty}\right)$
such that $FS\left(\left\langle y_{1,n}\right\rangle _{n=1}^{\infty}\right)\subseteq B_{1}\left(j\right)$.
Let $C_{2}=\cap\left\{ B_{2}\left(a,j\right):\:a\in FS\left(\left\langle y_{1,n}\right\rangle _{n=1}^{m}\right)\right\} $.
Since $FS\left(\left\langle y_{1,n}\right\rangle _{n=1}^{m}\right)$
is finite, we have $C_{2}\in p_{2}$ so pick a product subsystem $\left\langle y_{2,n}\right\rangle _{n=1}^{\infty}$
of $FS\left(\left\langle x_{2,n}\right\rangle _{n=1}^{\infty}\right)$
such that $FS\left(\left\langle y_{2,n}\right\rangle _{n=1}^{\infty}\right)\subseteq C_{2}$.
Let $t\in\left\{ 2,3,\ldots,l-1\right\} $ and assume $\left\langle y_{t,n}\right\rangle _{n=1}^{\infty}$
has been chosen. Let 
\[
C_{t+1}=\bigcap\left\{ B_{t+1}\left(a_{1},a_{2},\ldots,a_{t},j\right):\:\left(a_{1},a_{2},\ldots,a_{t}\right)\in\times_{i=1}^{t}FS\left(\left\langle y_{i,n}\right\rangle _{n=1}^{m}\right)\right\} .
\]
}{\large\par}

{\large{}Then $C_{t+1}\in p_{t+1}$ so pick a product subsystem $\left\langle y_{t+1,n}\right\rangle _{n=1}^{\infty}$
of $FS\left(\left\langle x_{t+1,n}\right\rangle _{n=1}^{\infty}\right)$
such that $FS\left(\left\langle y_{t+1,n}\right\rangle _{n=1}^{\infty}\right)\subseteq C_{t+1}$.
Then, }{\large\par}

{\large{}
\[
\left(\times_{i=1}^{l-1}FS\left(\left\langle y_{i,n}\right\rangle _{n=1}^{m}\right)\right)\times FS\left(\left\langle y_{l,n}\right\rangle _{n=1}^{\infty}\right)\subset D_{j}.
\]
which proves the theorem.}{\large\par}
\end{proof}
{\large{}To show our main theorem, the following definition is needed,
which is a partial semigroup version of \cite[Definition 18.12 page 469]{key-5}.}{\large\par}
\begin{defn}
{\large{}\label{Definition 9} Let $\langle y_{n}\rangle_{n=1}^{\infty}$
be an $adequate\,sequence$ in an adequate commutative partial semigroup
$(S,+)$ and let $k,m\in\mathbb{N}$. Then $\langle x_{n}\rangle_{n=1}^{m}$
is a weak product subsystem of $\langle y_{n}\rangle_{n=k}^{\infty}$
if and only if there exists a sequence $\langle H_{n}\rangle_{n=1}^{m}$
in $\mathcal{P}_{f}(\mathbb{N})$ such that, }{\large\par}

{\large{}(a) $H_{n}\cap H_{n+1}=\emptyset$ for each $n\neq k$ in
$\{1,2,\ldots,m\}$ and }{\large\par}

{\large{}(b) $x_{n}=\underset{t\in H_{n}}{\sum}y_{t}$ for each $n\in\{1,2,\ldots,m\}$}{\large\par}
\end{defn}

{\large{}Recall that in a product subsystem one requires that $\max H_{n}<\min H_{n+1}$.}{\large\par}
\begin{lem}
{\large{}\label{Lemma 10} Let $l\in\mathbb{N}$ and for each $i\in\left\{ 1,2,\ldots,l\right\} $,
$S_{i}$ be countable adequate commutative partial semigroup and let
$\left\langle x_{i,n}\right\rangle _{n=1}^{\infty}$ be an $adequate\,sequence$
in $S_{i}$. Let,}{\large\par}

{\large{}
\[
\mathscr{L}=\left\{ p\in\delta\left(\times_{i=1}^{l}S_{i}\right):\text{ for each }A\in p\text{ and each }m,k\in\mathbb{N},\right.
\]
}{\large\par}

{\large{}
\[
\text{ there exists for each }i\in\left\{ 1,2,...,l\right\} \text{ a weak product subsystem }
\]
}{\large\par}

{\large{}
\[
\left.\text{\ensuremath{\left\langle x_{i,n}\right\rangle _{n=1}^{m}\text{ of }}\ensuremath{\ensuremath{\left\langle y_{i,n}\right\rangle _{n=k}^{\infty}}} such that }\times_{i=1}^{l}FS\left(\left\langle x_{i,n}\right\rangle _{n=1}^{m}\right)\subseteq A\right\} 
\]
}{\large\par}

{\large{}Then $\mathscr{L}$ is a compact subsemigroup of $\delta\left(\times_{i=1}^{l}S_{i}\right)$.}{\large\par}
\end{lem}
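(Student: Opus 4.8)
The plan is to check three things: that $\mathscr{L}$ is closed (hence compact, being a closed subset of the compact space $\delta(\times_{i=1}^{l}S_{i})$), that it is closed under the operation, and that it is nonempty. Write $\langle x_{i,n}\rangle_{n=1}^{\infty}$ for the given adequate sequences, and for $m,k\in\mathbb{N}$ set
\[
E_{m,k}=\left\{ A\subseteq\times_{i=1}^{l}S_{i}:\exists\text{ weak product subsystems }\langle y_{i,n}\rangle_{n=1}^{m}\text{ of }\langle x_{i,n}\rangle_{n=k}^{\infty}\text{ with }\times_{i=1}^{l}FS(\langle y_{i,n}\rangle_{n=1}^{m})\subseteq A\right\}.
\]
Enlarging $A$ preserves the containment, so each $E_{m,k}$ is an up-set, and by definition $p\in\mathscr{L}$ iff every $A\in p$ lies in $\bigcap_{m,k}E_{m,k}$.

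\emph{Compactness.} I would show the complement of $\mathscr{L}$ is open. If $p\notin\mathscr{L}$ there are $m,k$ and $A\in p$ with $A\notin E_{m,k}$; then $\overline{A}\cap\delta(\times_{i}S_{i})$ is a basic neighbourhood of $p$, and every $q$ in it has $A\in q$ with $A\notin E_{m,k}$, so lies outside $\mathscr{L}$. Hence $\mathscr{L}$ is closed, and therefore compact.

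\emph{Subsemigroup (the technical heart).} Given $p,q\in\mathscr{L}\subseteq\delta(\times_{i}S_{i})$, the sum $p+q$ already lies in $\delta(\times_{i}S_{i})$, so I only need $p+q\in\mathscr{L}$. Fix $A\in p+q$ and $m,k$, and set $P=\{x:-x+A\in q\}\in p$. Using $p\in\mathscr{L}$, choose weak product subsystems $\langle u_{i,n}\rangle_{n=1}^{m}$ of $\langle x_{i,n}\rangle_{n=k}^{\infty}$, with index blocks $H^{u}_{i,n}$, such that $\times_{i}FS(\langle u_{i,n}\rangle_{n=1}^{m})\subseteq P$. Let $K$ exceed every $\max H^{u}_{i,n}$. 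The set $\times_{i}FS(\langle u_{i,n}\rangle_{n=1}^{m})$ is finite and each member $a$ satisfies $-a+A\in q$, so $C=\bigcap_{a}(-a+A)\in q$; using $q\in\mathscr{L}$, choose weak product subsystems $\langle v_{i,n}\rangle_{n=1}^{m}$ of $\langle x_{i,n}\rangle_{n=K}^{\infty}$, with blocks $H^{v}_{i,n}$, such that $\times_{i}FS(\langle v_{i,n}\rangle_{n=1}^{m})\subseteq C$. Now put $w_{i,n}=u_{i,n}+v_{i,n}$ with block $H^{w}_{i,n}=H^{u}_{i,n}\cup H^{v}_{i,n}$. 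Since the $H^{u}$'s lie below $K$ and the $H^{v}$'s at or above $K$, adjacent blocks remain disjoint, so $\langle w_{i,n}\rangle_{n=1}^{m}$ is again a weak product subsystem; commutativity and adequacy ensure the relevant sums are defined and that every element of $\times_{i}FS(\langle w_{i,n}\rangle_{n=1}^{m})$ splits as $a+c$ with $a\in\times_{i}FS(\langle u\rangle)\subseteq P$ and $c\in\times_{i}FS(\langle v\rangle)\subseteq C\subseteq-a+A$, whence it lies in $A$. Thus $A\in E_{m,k}$, giving $p+q\in\mathscr{L}$. I expect this to be the main obstacle: the combined index blocks need not satisfy $\max H_{n}<\min H_{n+1}$, so the weakening in Definition \ref{Definition 9} from product subsystems to weak product subsystems is exactly what lets the closure argument work, and the definedness of the partial sums (via adequacy) must be handled with care.

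\emph{Nonemptiness.} Finally I would produce a point of $\mathscr{L}$. Let $p_{i}$ be the idempotent of Fact \ref{Fact 6} in $\bigcap_{k}\overline{FS(\langle x_{i,n}\rangle_{n=k}^{\infty})}\cap\delta S_{i}$; the standard consequence of idempotency is that every member of $p_{i}$ contains a product subsystem of $\langle x_{i,n}\rangle_{n=k}^{\infty}$ of any prescribed length $m$, and such subsystems are in particular weak product subsystems. Take $p=p_{1}\otimes\cdots\otimes p_{l}$, the iterated product ultrafilter on $\times_{i}S_{i}$; one checks $p\in\delta(\times_{i}S_{i})$ since $\varphi$ on the product is the product of the $\varphi_{i}$ and each $p_{i}\in\delta S_{i}$. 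Given $A\in p$ and $m,k$, unwinding membership in $p$ through the nested sets exactly as in the proof of Theorem \ref{Theorem 8}---but requesting a length-$m$ subsystem in every coordinate---yields $\langle y_{i,n}\rangle_{n=1}^{m}$ with $\times_{i}FS(\langle y_{i,n}\rangle_{n=1}^{m})\subseteq A$, so $A\in E_{m,k}$. Hence $p\in\mathscr{L}$, and the three parts together show that $\mathscr{L}$ is a nonempty compact subsemigroup of $\delta(\times_{i=1}^{l}S_{i})$.
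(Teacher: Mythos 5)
Your proof is correct, and two of its three parts follow the paper's proof essentially verbatim: the compactness argument (closed because defined by a property of all members of the ultrafilter) and the closure of $\mathscr{L}$ under $+$, where you use the same set $\left\{ \bar{a}:-\bar{a}+A\in q\right\} \in p$, the same device of choosing the second family of subsystems with index blocks lying strictly beyond those of the first, the same union-of-blocks construction, and the same appeal to commutativity to split each element of the combined product as $\bar{a}+\bar{b}$ with $\bar{b}\in B\subseteq-\bar{a}+A$. Where you genuinely diverge is nonemptiness. The paper disposes of it in one line: since product subsystems are weak product subsystems, $\mathscr{L}\neq\emptyset$ ``from the assumption and Theorem \ref{Theorem 8}'' --- an appeal to the partition theorem that implicitly rests on the correspondence between partition regular families and ultrafilters, and that leaves unaddressed both why the resulting ultrafilter lies in $\delta\left(\times_{i=1}^{l}S_{i}\right)$ rather than merely in $\beta\left(\times_{i=1}^{l}S_{i}\right)$, and why subsystems can be started at an arbitrary $k$. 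You instead exhibit an explicit witness: the iterated tensor product $p_{1}\otimes\cdots\otimes p_{l}$ of the idempotents of Fact \ref{Fact 6}, checking $\delta$-membership directly via $\varphi\left(\bar{s}\right)=\times_{i}\varphi_{i}\left(s_{i}\right)$ and extracting length-$m$ subsystems in every coordinate by unwinding tensor-product membership through nested sets $B_{t}$, exactly as in the proof of Theorem \ref{Theorem 8}. Your route is longer but self-contained, and it patches the points the paper's one-liner glosses over; the paper's route is shorter given that Theorem \ref{Theorem 8} is already proved. Both versions lean on the same standard Galvin--Glazer fact that an idempotent in $\bigcap_{k=1}^{\infty}\overline{FS\left(\left\langle x_{i,n}\right\rangle _{n=k}^{\infty}\right)}\cap\delta S_{i}$ yields, inside any of its members, $FS$-sets of product subsystems starting at any prescribed index --- a fact the paper also uses without proof in Theorem \ref{Theorem 8}, so your reliance on it is at the same level of rigor as the original.
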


\begin{proof}
{\large{}Since any product subsystem is also a weak product subsystem,
we have, from the assumption and theorem \ref{Theorem 8}, $\mathcal{L}\neq\emptyset$
. Since $\mathscr{L}$ is defined as the set of ultrafilters all of
whose members satisfy a given property, $\mathscr{L}$ is closed,
hence compact. To see that $\mathscr{L}$ is a semigroup, let $p,q\in\mathscr{L}$,
let $A\in p+q$ and let $m,k\in\mathbb{N}$. Then $\left\{ \bar{a}\in\times_{i=1}^{l}S_{i}:\:-\bar{a}+A\in q\right\} \in p$
so choose each $i\in\left\{ 1,2,\ldots,l\right\} $ a weak product
subsystem $\left\langle x_{i,n}\right\rangle _{n=1}^{m}$ of $FS\left(\left\langle y_{i,n}\right\rangle _{n=k}^{\infty}\right)$
such that 
\[
\times_{i=1}^{l}FS\left(\left\langle x_{i,n}\right\rangle _{n=1}^{m}\right)\subseteq\left\{ \bar{a}\in\times_{i=1}^{l}S_{i}:\:-\bar{a}+A\in q\right\} .
\]
 Given $i\in\left\{ 1,2,\ldots,l\right\} $ and $n\in\left\{ 1,2,\ldots,m\right\} $,
pick $H_{i,n}\in\mathcal{P}_{f}\left(\mathbb{N}\right)$ with min
$H_{i,n}\geq k$ such that,}\\
{\large{} $x_{i,n}=\sum_{t\in H_{i,n}}y_{i,t}$ and if $1\leq n<s\leq m$,
then $H_{i,n}\cap H_{i,s}=\emptyset$. Let $r=\text{max }\left(\bigcup_{i=1}^{l}\bigcup_{n=1}^{m}H_{i,m}\right)+1$
and let 
\[
B=\bigcap\left\{ -\bar{a}+A:\:\bar{a}\in\times_{i=1}^{l}FS\left(\left\langle x_{i,n}\right\rangle _{n=1}^{m}\right)\right\} .
\]
 Then $B\in q$ so choose for each $i\in\left\{ 1,2,\ldots,l\right\} $
a weak product subsystem $\left\langle z_{i,n}\right\rangle _{n=1}^{m}$
of $FS\left(\left\langle y_{i,n}\right\rangle _{n=r}^{\infty}\right)$
such that $\times_{i=1}^{l}FS\left(\left\langle z_{i,n}\right\rangle _{n=1}^{m}\right)\subseteq B$.
}\\
{\large{}Given $i\in\left\{ 1,2,\ldots,l\right\} $ and $n\in\left\{ 1,2,\ldots,m\right\} $,
pick $K_{i,n}\in\mathcal{P}_{f}\left(\mathbb{N}\right)$ with min
$K_{i,n}\geq r$ such that $z_{i,n}=\sum_{t\in K_{i,n}}y_{i,t}$ and
if $1\leq n<s\leq m$, then $K_{i,n}\cap K_{i,s}=\emptyset$. For
$i\in\left\{ 1,2,\ldots,l\right\} $ and $n\in\left\{ 1,2,\ldots,m\right\} $,
let $L_{i,n}=H_{i,n}\cup K_{i,n}$. Then 
\[
\sum_{t\in L_{i,n}}y_{i,t}=\sum_{t\in H_{i,n}}y_{i,t}+\sum_{t\in K_{i,n}}y_{i,t}=x_{i,t}+z_{i,t}
\]
 and if $1\leq n<s\leq m$, then $L_{i,n}\cap L_{i,s}=\emptyset$
. Thus for each $i\in\left\{ 1,2,\ldots,l\right\} $, $\left\langle x_{i,n}+z_{i,n}\right\rangle _{n=1}^{m}$
is a weak product subsystem of $FS\left(\left\langle y_{i,n}\right\rangle _{n=k}^{\infty}\right)$.}{\large\par}

{\large{}Finally we claim that $\times_{i=1}^{l}FS\left(\left\langle x_{i,n}+z_{i,n}\right\rangle _{n=1}^{m}\right)\subseteq A$.
To this end let $\bar{c}\in\times_{i=1}^{l}FS\left(\left\langle x_{i,n}+z_{i,n}\right\rangle _{n=1}^{m}\right)$
be given. For each $i\in\left\{ 1,2,\ldots,l\right\} $, pick $F_{i}\subseteq\left\{ 1,2,\ldots,m\right\} $
such that $c_{i}=\sum_{t\in F_{i}}\left(x_{i,t}+z_{i,t}\right)$ and
let $a_{i}=\sum_{t\in F_{i}}x_{i,t}$ and $b_{i}=\sum_{t\in F_{i}}z_{i,t}$.
Then $\bar{b}\in\times_{i=1}^{l}FS\left(\left\langle z_{i,n}\right\rangle _{n=1}^{m}\right)$
so $\bar{b}\in B$. }\\
{\large{}Since $\bar{a}\in\times_{i=1}^{l}FS\left(\left\langle x_{i,n}\right\rangle _{n=1}^{m}\right)$
one has that $\bar{b}\in-\bar{a}+A$ so that $\bar{a}+\bar{b}\in A$.
Since each $S_{i}$ is commutative, we have for each $i\in\left\{ 1,2,\ldots,l\right\} $
that 
\[
\bar{c}=\sum_{t\in F_{i}}\left(x_{i,t}+z_{i,t}\right)=\sum_{t\in F_{i}}x_{i,t}+\sum_{t\in F_{i}}z_{i,t}=\bar{a}.\bar{b}
\]
 as required. }{\large\par}
\end{proof}
{\large{}Now we will prove our main theorem.}{\large\par}
\begin{thm}
{\large{}\label{Theorem 11} Let $l\in\mathbb{N}$ and for each $i\in\left\{ 1,2,\ldots,l\right\} $,
let $S_{i}$ be countable adequate commutative partial semigroup and
let $\left\langle y_{i,n}\right\rangle _{n=1}^{\infty}$ be a sequence
in $S_{i}$. Let $C$ be an $IP^{*}$ set in $\times_{i=1}^{l}S_{i}$,
and let $m\in\mathbb{N}$. Then for each $i\in\left\{ 1,2,\ldots,l\right\} $
there is a weak product subsystem $\left\langle x_{i,n}\right\rangle _{n=1}^{m}$
of $FS\left(\left\langle y_{i,n}\right\rangle _{n=1}^{\infty}\right)$
such that $\times_{i=1}^{l}FS\left(\left\langle x_{i,n}\right\rangle _{n=1}^{m}\right)\subseteq C$. }{\large\par}
\end{thm}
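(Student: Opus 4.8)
The plan is to find an idempotent ultrafilter inside the semigroup $\mathscr{L}$ furnished by Lemma \ref{Lemma 10} and then to apply the defining property of $IP^{\star}$ sets; once Lemma \ref{Lemma 10} is in hand the argument is short.

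First I would apply Lemma \ref{Lemma 10} to the semigroups $S_{1},\ldots,S_{l}$ together with the (adequate) base sequences $\left\langle y_{i,n}\right\rangle _{n=1}^{\infty}$, obtaining the set $\mathscr{L}$, which that lemma asserts is a nonempty compact subsemigroup of $\delta\left(\times_{i=1}^{l}S_{i}\right)$. Since $\delta\left(\times_{i=1}^{l}S_{i}\right)$ is a compact right topological semigroup and $\mathscr{L}$ is a closed subsemigroup of it, $\mathscr{L}$ is itself a compact right topological semigroup, so by the Ellis--Numakura lemma \cite[Theorem 2.5, page 40]{key-5} (the existence result already invoked in Fact \ref{Fact 6}) it contains an idempotent $p$.

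The decisive step is to bring in the hypothesis on $C$. Because $p\in\mathscr{L}\subseteq\delta\left(\times_{i=1}^{l}S_{i}\right)$ is an idempotent of $\delta\left(\times_{i=1}^{l}S_{i}\right)$, and because an $IP^{\star}$ set in $\times_{i=1}^{l}S_{i}$ belongs, by definition, to \emph{every} idempotent of $\delta\left(\times_{i=1}^{l}S_{i}\right)$, we obtain $C\in p$. Now I would simply unwind the definition of membership in $\mathscr{L}$: since $p\in\mathscr{L}$ and $C\in p$, taking the given $m$ and $k=1$ yields for each $i\in\left\{ 1,2,\ldots,l\right\} $ a weak product subsystem $\left\langle x_{i,n}\right\rangle _{n=1}^{m}$ of $FS\left(\left\langle y_{i,n}\right\rangle _{n=1}^{\infty}\right)$ with $\times_{i=1}^{l}FS\left(\left\langle x_{i,n}\right\rangle _{n=1}^{m}\right)\subseteq C$, which is precisely the asserted conclusion.

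I do not expect a serious obstacle: the combinatorial heart of the matter has already been carried out in Lemma \ref{Lemma 10} (closure of $\mathscr{L}$ under the ultrafilter product), and the present theorem only needs the existence of an idempotent in $\mathscr{L}$ together with the one-line observation that the $IP^{\star}$ hypothesis forces $C$ into that idempotent. The single point meriting a moment's care is the compatibility of the two conditions imposed on $p$: we must pick an idempotent lying in $\mathscr{L}$, so that the structural conclusion of Lemma \ref{Lemma 10} applies, and it is exactly the $IP^{\star}$ property --- membership of $C$ in every idempotent, not merely in some distinguished one --- that renders any idempotent of $\mathscr{L}$ usable.
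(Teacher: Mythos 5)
Your proposal is correct and follows essentially the same route as the paper's own proof: take the semigroup $\mathscr{L}$ from Lemma \ref{Lemma 10}, extract an idempotent $p\in\mathscr{L}$, use the $IP^{\star}$ hypothesis to get $C\in p$, and unwind the definition of $\mathscr{L}$. Your write-up is in fact slightly more careful than the paper's, since you explicitly cite the Ellis--Numakura idempotent theorem and note why any idempotent of $\mathscr{L}$ suffices.
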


\begin{proof}
{\large{}Let $\mathscr{L}$ be as in Lemma \ref{Lemma 10}. Then $\mathscr{L}$
is a compact subsemigroup of $\delta\left(\times_{i=1}^{l}S_{i}\right)$.
So there is an idempotent $p\in\mathscr{L}$. Let $C$ be an be an
$IP^{*}$ set in $\times_{i=1}^{l}S_{i}$. Hence $C\in p$. Thus by
the definition of $\mathscr{L}$ for each $i\in\left\{ 1,2,\ldots,l\right\} $,
there is a weak product subsystem $\left\langle x_{i,n}\right\rangle _{n=1}^{m}$
of $FS\left(\left\langle y_{i,n}\right\rangle _{n=1}^{\infty}\right)$
such that $\times_{i=1}^{l}FP\left(\left\langle x_{i,n}\right\rangle _{n=1}^{m}\right)\subseteq C$.}{\large\par}

{\large{}This completes the proof.}{\large\par}
\end{proof}
\textbf{\large{}Acknowledgment: }{\large{}The author thanks to Dibyendu
De for his constant inspiration to do this project.}{\large\par}

\end{document}